\documentclass[11pt]{article}

\usepackage[margin=1in]{geometry}
\usepackage{amsmath,amssymb,amsthm,amsfonts,mathrsfs,nccmath,bbm,bbold,mathbbol,dsfont}
\usepackage{graphicx,color,float,wrapfig}
\usepackage{enumerate,booktabs,multirow,siunitx,threeparttable}
\usepackage{url,fancyhdr,indentfirst}
\usepackage{natbib,comment,cite}
\usepackage{caption,subcaption}
\usepackage{bm}
\usepackage{tikz,tikz-qtree,pgfplots}
\usepackage{xr}
\usepackage[title]{appendix}
\usepackage[onehalfspacing]{setspace}
\usepackage[colorlinks=true,citecolor=blue]{hyperref}

\pgfplotsset{compat=1.18}
\usetikzlibrary{arrows.meta,positioning,shadows.blur}

\tikzset{
  font=\small,
  box/.style={
    rectangle,draw=black,rounded corners,thick,
    minimum width=5.5cm,align=center,inner sep=4pt,blur shadow={shadow blur steps=4},fill=gray!5
  },
  arr/.style={-Latex,thick,shorten >=2pt,shorten <=2pt},
  darr/.style={arr,dashed},
  mybox/.style={
    rectangle,draw=black,line width=1.2pt,minimum width=2.3cm,minimum height=0.9cm,
    inner sep=4pt,align=center,blur shadow={shadow blur steps=4},fill=gray!2 
  }
}

\makeatletter
\pgfarrowsdeclare{HalfBarbUp}{HalfBarbUp}
{
  \pgfarrowsleftextend{+-\pgflinewidth}
  \pgfarrowsrightextend{2.5\pgflinewidth}
}
{
  \pgfsetdash{}{0pt}
  \pgfpathmoveto{\pgfpoint{0pt}{0pt}}
  \pgfpathlineto{\pgfpoint{-4pt}{ 2.4pt}}
  \pgfpathlineto{\pgfpoint{-3pt}{ 0pt}}
  \pgfusepathqfill
}
\pgfarrowsdeclare{HalfBarbDown}{HalfBarbDown}
{
  \pgfarrowsleftextend{+-\pgflinewidth}
  \pgfarrowsrightextend{2.5\pgflinewidth}
}
{
  \pgfsetdash{}{0pt}
  \pgfpathmoveto{\pgfpoint{0pt}{0pt}}
  \pgfpathlineto{\pgfpoint{-4pt}{-2.4pt}}
  \pgfpathlineto{\pgfpoint{-3pt}{ 0pt}}
  \pgfusepathqfill
}
\makeatother

\renewcommand{\ge}{\geqslant}
\renewcommand{\le}{\leqslant}

\renewcommand{\epsilon}{\varepsilon}

\theoremstyle{plain}
\newtheorem{theorem}{Theorem}

\newtheorem{lemma}{Lemma}
\newtheorem{proposition}{Proposition}

\theoremstyle{definition}

\theoremstyle{remark}

\theoremstyle{definition}

\renewcommand{\cite}{\citet}

\renewcommand{\P}{\mathbb{P}}

\title{Asymptotic Probabilities of Attaining the Maximum in Heterogeneous Gaussian Samples}

\author{
  Chunxu Zhang\textsuperscript{1} \quad Baiqi Miao\textsuperscript{1} \quad and \quad Tiantian Mao\textsuperscript{1} \\
  \textsuperscript{1}Department of Statistics and Finance, School of Management, \\
  University of Science and Technology of China, Hefei, Anhui, China. \\
\textsf{zhangcx0627@mail.ustc.edu.cn},  
\textsf{bqmiao@ustc.edu.cn}, \textsf{tmao@ustc.edu.cn}
}

\date{\today}

\begin{document}
\maketitle 
\begin{abstract}
We study asymptotic probabilities of attaining the maximum in heterogeneous Gaussian samples. In the two-group setting, the first sample has variance $1$ and size $n_1$, while the second has variance $\sigma^2>1$ and size $n_2$. We investigate the probability that the maximum of the standard-variance group exceeds that of the high-variance group. Using the classical extreme-value normalization for Gaussian maxima together with a second-order comparison of the centering terms, we show that this probability admits a non-degenerate limit if and only if
$n_1\sim C n_2^{\sigma^2}(\log n_2)^{-(\sigma^2-1)/2}$ as $n_1,n_2\to\infty$
for some $C\in(0,\infty)$. In that regime, the limit admits an integral representation. Outside the critical regime, the comparison necessarily degenerates to $0$ or $1$. We then extend the analysis to finitely many independent Gaussian groups and obtain a generalized integral representation for the limiting winning probabilities. The results provide a complete asymptotic classification for this maximum-comparison problem.
\end{abstract}
 
\section{Introduction}

Let $X_1,\dots,X_n$ be independent Gaussian random variables whose variances may differ across groups, and consider the probability that a given group attains the overall maximum. This is a natural comparison problem for heterogeneous samples (often referred to as the \emph{winner problem}; see, e.g., \cite{davydov2024distribution}). When all variables are identically distributed, symmetry determines the answer immediately. Once the variances differ, however, the competition between tail heaviness and sample size becomes nontrivial.

Exact expressions are available for certain low-dimensional comparison problems: for $n=2$, the winning probability reduces to a univariate normal probability, while for $n=3$, it reduces to a bivariate normal orthant probability, which in the centered case admits the classical arcsine formula; see, for instance, \cite{nadarajah2008exact,habibi2011exact,nadarajah2019maximum}. 
To the best of our knowledge, however, a tractable closed-form evaluation of the winning probabilities is generally not available for \(n\ge 4\). Indeed, for independent Gaussian variables,
\[
\mathbb P\!\left(X_1=\max_{1\le j\le n}X_j\right)
=
\mathbb P\!\left(X_1-X_j\ge 0,\ \forall j=2,\ldots,n\right),
\]
which reduces the problem to the positive orthant probability of an \((n-1)\)-dimensional Gaussian vector. While such probabilities admit integral representations and numerical evaluation, they do not in general lead to a simple closed-form expression in higher dimensions. This makes asymptotic analysis a natural and, in effect, unavoidable approach.

At the same time, the asymptotic regime is of intrinsic interest. It reveals the large-sample balance between variance heterogeneity and sample size, identifies the critical scaling at which different groups remain asymptotically competitive, and yields explicit limiting winning probabilities. In this sense, the asymptotic theory developed here is not only a substitute for an unavailable exact formula, but also a structural description of the comparison problem itself.

The purpose of this paper is to determine, in explicit asymptotic form, when a high-variance group dominates the maximum and when several groups remain asymptotically competitive. Our starting point is the comparison problem itself, while classical extreme-value theory for Gaussian maxima serves as the main technical tool. After suitable centering and scaling, the maximum of an i.i.d.\ Gaussian sample converges in distribution to the Gumbel law; see, for example, \cite{leadbetter2012extremes,resnick1987,embrechts1997}. For the present problem, however, marginal extreme-value limits alone are not sufficient. To compare maxima coming from different Gaussian groups, one must analyze the relative position of their deterministic centering terms at a finer scale. In the two-group case studied here, this leads to a critical balance between the sample sizes and reveals an essential logarithmic correction beyond the leading polynomial order.

More precisely, we first consider two independent Gaussian groups. The first group consists of $n_1$ i.i.d.\ $N(0,1)$ variables, and the second consists of $n_2$ i.i.d.\ $N(0,\sigma^2)$ variables with $\sigma>1$. Writing $M^{(1)}_{n_1}$ and $M^{(2)}_{n_2}$ for the corresponding groupwise maxima, we study the asymptotic behavior of
\[
\mathbb P\!\left(M^{(1)}_{n_1}>M^{(2)}_{n_2}\right).
\]
If $n_1$ and $n_2$ are of the same order, then the larger-variance second group asymptotically wins with probability one. The nontrivial regime is therefore the critical one in which the lower-variance first group is allowed to grow faster. Our main two-group result shows that a non-degenerate limit exists if and only if
\[
n_1\sim C n_2^{\sigma^2}(\log n_2)^{-(\sigma^2-1)/2}
\]
for some constant $C>0$. In that case, the limiting winning probability admits the integral representation displayed in Theorem~\ref{prop:2}. This gives a complete classification of the asymptotic comparison: outside the critical regime, the limit necessarily degenerates to $0$ or $1$.

We then extend the analysis to finitely many independent Gaussian groups with different variances and sample sizes. Without loss of generality, taking one group as a baseline, we show that the vector of winning probabilities has a non-degenerate limit if and only if every other group is balanced against the baseline at its corresponding critical scale. Under this condition, the limiting winning probabilities admit a coupled integral representation. Thus, in both the two-group and the multi-group settings, the same principle governs the comparison problem: the leading polynomial growth of the sample sizes is not sufficient by itself, and the logarithmic correction is essential for a nontrivial limit.

The paper is organized as follows. Section~\ref{sec:2} analyzes the two-group problem. After recalling the Gaussian extreme-value normalization, we first identify the degenerate regime and then prove the complete characterization of the critical regime together with the integral representation of the limit. Section~\ref{sec:3} treats the multi-group case and derives the generalized limiting integral representations.

\section{Two-group case} \label{sec:2}
Let $X_1,\ldots,X_{n_1}$ be i.i.d.\ random variables with distribution $N(0,1)$, and let $X_{n_1+1},\ldots,X_{n_1+n_2}$ be i.i.d.\ random variables with distribution $N(0,\sigma^2)$, where $\sigma>1$. Assume that all these random variables are mutually independent, and write $n=n_1+n_2$. We are interested in the probability that the overall maximum is attained by the first observation from the first group, namely
\[
\mathbb P\!\left(\max_{1\le i\le n}X_i=X_1\right).
\]
Define
\[
M^{(1)}_{n_1}:=\max_{1\le i\le n_1}X_i,
\qquad
M^{(2)}_{n_2}:=\max_{n_1+1\le i\le n}X_i,
\]
and
\[
M_n:=\max_{1\le i\le n}X_i=\max\{M^{(1)}_{n_1},M^{(2)}_{n_2}\}.
\]
Since Gaussian distributions are continuous, ties occur with probability zero. By exchangeability within the first group,
\begin{equation} \label{eq:260221-1}
n_1\,\mathbb P\!\left( \max_{1\le i\le n} X_i = X_1 \right)
=
\mathbb P\!\left( M^{(1)}_{n_1} > M^{(2)}_{n_2} \right). \footnote{To see~\eqref{eq:260221-1}, note that the variables are continuous, and thus, the events
$\{\max_{1\le i\le n}X_i=X_j\}$,  $j=1,\ldots,n_1,$
are pairwise disjoint up to null sets. Their union is exactly the event
$ \{M^{(1)}_{n_1}>M^{(2)}_{n_2}\}
=
\{\max_{1\le i\le n}X_i=M^{(1)}_{n_1}\}.$
Hence,
\[
\mathbb P\!\left(M^{(1)}_{n_1}>M^{(2)}_{n_2}\right)
=
\sum_{j=1}^{n_1}\mathbb P\!\left(\max_{1\le i\le n}X_i=X_j\right).
\]
By exchangeability within the first group, all terms in the sum are equal to
$\mathbb P\!\left(\max_{1\le i\le n}X_i=X_1\right),$
which yields \eqref{eq:260221-1}.
}
\end{equation}
Thus the problem reduces to understanding the comparison probability on the right-hand side. 
Although an exact integral representation is available, it is not analytically transparent for the asymptotic comparison considered here.   Indeed, denoting by $\Phi$ the standard normal distribution function, we can write 
\[
\mathbb P\left(M^{(1)}_{n_1}>M^{(2)}_{n_2}\right)
=
\int_{\mathbb R}(\Phi(x/\sigma))^{n_2}\,\mathrm d\!\left(\Phi(x)^{n_1}\right),
\]
which is not analytically tractable. The asymptotic regime is therefore the natural one to study.

We shall use the classical extreme-value normalization for Gaussian maxima; see, for example, \cite{leadbetter2012extremes}. Let $F_\Lambda(x)=\exp(-e^{-x})$, $x\in\mathbb R$, denote the Gumbel distribution function. We call a random variable $\Lambda$ a \emph{Gumbel random variable} if $\Lambda$ follows the distribution $F_\Lambda$.
\begin{lemma}[Gaussian extreme-value theorem]\label{lem:EVT}
Let $Z_1,\ldots,Z_n$ be i.i.d.\ random variables with distribution $N(0,1)$. Then
\[
\frac{\max_{1\le i\le n}Z_i-b_n}{a_n}
\stackrel{\mathrm d}{\longrightarrow}\Lambda,
\]
 where $\Lambda\sim F_{\Lambda}$ is a Gumbel random variable, $\stackrel{\mathrm d}{\longrightarrow}$ denotes the convergence in distribution,
\[
a_n=(2\log n)^{-1/2},
\qquad{\rm and}\qquad
b_n=(2\log n)^{1/2}-\frac{\log\log n+\log(4\pi)}{2(2\log n)^{1/2}}.
\]
\end{lemma}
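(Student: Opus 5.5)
The plan is to compute the distribution function of the normalized maximum directly and show that it converges pointwise to $\exp(-e^{-x})$. Since the $Z_i$ are i.i.d., for fixed $x\in\mathbb R$ we have
\[
\mathbb P\!\left(\frac{\max_{i\le n}Z_i-b_n}{a_n}\le x\right)=\Phi(u_n)^n,\qquad u_n:=b_n+a_nx .
\]
Because $u_n\to\infty$, we get $\log\Phi(u_n)^n=n\log(1-\bar\Phi(u_n))=-n\bar\Phi(u_n)(1+o(1))$, where $\bar\Phi=1-\Phi$. The whole statement therefore reduces to showing
\[
n\,\bar\Phi(u_n)\to e^{-x}\qquad\text{for every fixed } x .
\]
Convergence of the distribution functions at every point, the Gumbel law being continuous, then gives the asserted convergence in distribution.

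For the tail we use the Mills-ratio asymptotics $\bar\Phi(u)\sim \varphi(u)/u$ as $u\to\infty$, with $\varphi(u)=(2\pi)^{-1/2}e^{-u^2/2}$. This follows from the elementary bounds $\frac{u}{1+u^2}\varphi(u)\le\bar\Phi(u)\le\varphi(u)/u$, obtained by integrating by parts. It then suffices to show
\[
\log n-\tfrac12\log(2\pi)-\tfrac12 u_n^2-\log u_n\;\to\;-x .
\]

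The main work is expanding $u_n^2$ to $o(1)$ accuracy. Write $L=2\log n$, so that $a_n=L^{-1/2}$ and $u_n=L^{1/2}+L^{-1/2}\bigl(x-\tfrac12(\log\log n+\log 4\pi)\bigr)$. Squaring gives
\[
u_n^2=L+2x-\log\log n-\log(4\pi)+O\!\left(\frac{(\log\log n)^2}{\log n}\right),
\]
and so $\tfrac12u_n^2=\log n+x-\tfrac12\log\log n-\tfrac12\log(4\pi)+o(1)$. Next, $\log u_n=\tfrac12\log L+o(1)=\tfrac12\log 2+\tfrac12\log\log n+o(1)$. Substituting both expansions, the $\log n$ terms cancel and the $\tfrac12\log\log n$ terms cancel. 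The constants reduce to $-\tfrac12\log(2\pi)+\tfrac12\log(4\pi)-\tfrac12\log 2=0$. What remains is $-x+o(1)$, as required.

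The only delicate step is keeping careful track of the error terms. The cross term $2\cdot L^{1/2}\cdot L^{-1/2}(\cdots)$ contributes the $\log\log n$ and constant terms exactly. The squared correction is $O((\log\log n)^2/\log n)=o(1)$. The factor $(1+o(1))$ in the Mills ratio only contributes $o(1)$ inside the logarithm. Since these errors are all $o(1)$ uniformly for $x$ in compacts, the pointwise limit holds. This is the classical argument (e.g.\ \cite{leadbetter2012extremes}, Theorem 1.5.3), and in the paper one could also simply cite it.
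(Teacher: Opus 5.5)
Your proof is correct. The reduction to $n\bar\Phi(u_n)\to e^{-x}$, the Mills-ratio step, and the expansion $\tfrac12u_n^2=\log n+x-\tfrac12\log\log n-\tfrac12\log(4\pi)+o(1)$ (after which the constants cancel) are all accurate. The paper does not prove this lemma itself; it cites the classical result of Leadbetter et al., and your argument is exactly the standard proof from that reference.
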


Applying Lemma~\ref{lem:EVT} to the two groups gives
\begin{equation}\label{eq:groupwise-EVT}
\frac{M^{(1)}_{n_1}-b_{n_1}}{a_{n_1}}
\stackrel{\mathrm d}{\longrightarrow}\Lambda,
\qquad
\frac{M^{(2)}_{n_2}-\sigma b_{n_2}}{\sigma a_{n_2}}
\stackrel{\mathrm d}{\longrightarrow}\Lambda,
\end{equation}
where the two limits are independent because the two groups are independent. Based on this fact, we immediately obtain  the easy case in which the two sample sizes are of the same order.

\begin{proposition}\label{prop:1}
Suppose that there exists $\gamma\in(0,1)$ such that $n_1\sim \gamma n$ and $n_2\sim (1-\gamma)n$ as $n\to\infty$. Then 
\[
\lim_{n\to\infty}\mathbb P\!\left(M^{(1)}_{n_1}>M^{(2)}_{n_2}\right) = 0.
\]
\end{proposition}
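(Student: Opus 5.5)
We plan to use the groupwise normalization \eqref{eq:groupwise-EVT} and compare the deterministic centering terms on the scale of the first group. Write $U_{n_1}:=(M^{(1)}_{n_1}-b_{n_1})/a_{n_1}$ and $V_{n_2}:=(M^{(2)}_{n_2}-\sigma b_{n_2})/(\sigma a_{n_2})$. Both are tight, by Lemma~\ref{lem:EVT}. The event $\{M^{(1)}_{n_1}>M^{(2)}_{n_2}\}$ can be rewritten as
\[
a_{n_1}U_{n_1}-\sigma a_{n_2}V_{n_2} > \sigma b_{n_2}-b_{n_1}.
\]
The idea is that the right-hand side tends to $+\infty$, while the left-hand side is $O_{\mathbb P}(a_{n_1}+a_{n_2})=o_{\mathbb P}(1)$.

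The first step is the asymptotics of the centerings. Since $n_1\sim\gamma n$ and $n_2\sim(1-\gamma)n$, we have $\log n_i=\log n+O(1)$. Hence $b_{n_i}=(2\log n)^{1/2}(1+o(1))$ for $i=1,2$, and therefore
\[
\sigma b_{n_2}-b_{n_1}=(\sigma-1)(2\log n)^{1/2}(1+o(1))\to\infty,
\]
because $\sigma>1$. Only the leading term of $b_n$ from Lemma~\ref{lem:EVT} is needed here; the correction $\frac{\log\log n+\log(4\pi)}{2(2\log n)^{1/2}}$ is $o(1)$ and cannot compete.

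The second step is the probabilistic bound. Fix $K>0$. For large $n$, the event above is contained in
\[
\{|U_{n_1}|>K\}\cup\{|V_{n_2}|>K\}\cup\{(a_{n_1}+\sigma a_{n_2})K>\sigma b_{n_2}-b_{n_1}\}.
\]
The last event is deterministic. It is empty for large $n$, since $a_{n_i}\to0$ while the right-hand side diverges. By \eqref{eq:groupwise-EVT} and the portmanteau theorem,
\[
\limsup_{n\to\infty}\mathbb P\!\left(M^{(1)}_{n_1}>M^{(2)}_{n_2}\right)\le \mathbb P(|\Lambda|\ge K)+\mathbb P(|\Lambda'|\ge K),
\]
where $\Lambda'$ is an independent copy of $\Lambda$. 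Letting $K\to\infty$ gives the limit $0$.

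There is no real obstacle in this argument. The only care needed is to note that $n_1,n_2\to\infty$, which holds since $\gamma\in(0,1)$, so that Lemma~\ref{lem:EVT} applies to each group. One also has to check that the $\log\log$ corrections and the $O(1)$ difference between $\log n_1$ and $\log n_2$ are negligible against $(\sigma-1)(2\log n)^{1/2}$. Independence of the two groups is not even needed for this bound.
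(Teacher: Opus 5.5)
Your proof is correct and is essentially the paper's argument. Both show that each maximum deviates from its centering by $o_{\mathbb P}(1)$, while $\sigma b_{n_2}-b_{n_1}\sim(\sigma-1)\sqrt{2\log n}\to\infty$; the paper handles the first part with Slutsky (convergence in probability to $0$) where you use an explicit tightness-and-truncation argument with the portmanteau theorem.
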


\begin{proof}
By \eqref{eq:groupwise-EVT},
and noting that $a_{n_1}\to0$ and $a_{n_2}\to0$, Slutsky's theorem gives
\[
M^{(1)}_{n_1}-b_{n_1}\xrightarrow{\mathbb P}0\qquad {\rm and}
\qquad
M^{(2)}_{n_2}-\sigma b_{n_2}\xrightarrow{\mathbb P}0\qquad {\rm as}~n\to\infty.
\]
It follows immediately that 
\[
\bigl(M^{(1)}_{n_1}-M^{(2)}_{n_2}\bigr)-\bigl(b_{n_1}-\sigma b_{n_2}\bigr)\xrightarrow{\mathbb P}0\qquad {\rm as}~n\to\infty.
\]
It remains to show that $b_{n_1}-\sigma b_{n_2}\to-\infty$. 
Using $b_m=\sqrt{2\log m}+o(1)$  as   $m\to\infty$, we obtain
\begin{align*}
b_{n_1}-\sigma b_{n_2}
&=\sqrt{2\log n_1}-\sigma\sqrt{2\log n_2}+o(1)\\
&=\sqrt{2\log n_1}\left(1-\sigma\sqrt{\frac{\log n_2}{\log n_1}}\right)+o(1)\to-\infty,\qquad {\rm as}~n\to\infty,
\end{align*}
where the limit is due to $\sigma>1 $ and ${\log n_2}/{\log n_1}\to 1$ as $n\to\infty$. Hence
$M^{(1)}_{n_1}-M^{(2)}_{n_2}\xrightarrow{\mathbb P}-\infty,$
and consequently, 
$\mathbb P\!\left(M^{(1)}_{n_1}>M^{(2)}_{n_2}\right)\to0.$
This completes the proof. 
\end{proof}

Proposition~\ref{prop:1}  suggests a degenerate regime: when the two sample sizes are of the same order, the probability that the standard first group attains the overall maximum tends to zero. 
Thus, a non-degenerate comparison can only arise under a more delicate asymptotic balance between the two sample sizes.

The next result identifies this balance completely. It gives a necessary and sufficient condition for the winning probability to converge to a non-degenerate limit, and therefore fully characterizes when the comparison is nontrivial and when it necessarily degenerates.
Specifically, let
\begin{equation}\label{eq:N1N2}
n_1\sim C n_2^{\sigma^2}(\log n_2)^{-(\sigma^2-1)/2}~~~{\rm as}~n=n_1+n_2\to\infty,
\end{equation}
for some constant $C>0$. 

\begin{theorem}\label{prop:2}
For $\sigma>1$,  the sequence
  $\mathbb P (M^{(1)}_{n_1}>M^{(2)}_{n_2} )$
converges to a limit in $(0,1)$ if and only if \eqref{eq:N1N2} holds for some $C\in(0,\infty)$.  Moreover, in this case,
\begin{equation}\label{eq:limit_p}
\lim_{n\to\infty}\mathbb P\!\left(M^{(1)}_{n_1}>M^{(2)}_{n_2}\right)
=
\mathbb P\!\left(\Lambda_1>\sigma^2(\Lambda_2-\kappa(C,\sigma))\right)
=
\int_0^\infty \exp\left(-y - e^{-\kappa(C,\sigma)} y^{1/\sigma^2}\right) \mathrm dy,
\end{equation}
where $\Lambda_1$ and $\Lambda_2$ are independent Gumbel random variables, and
\begin{align}
 \label{eq:0516-1}
 \kappa(C,\sigma)
=
\frac{1}{\sigma^2}\log\frac{C}{\sigma}
+\frac12\left(1-\frac{1}{\sigma^2}\right)\log(4\pi).
\end{align}
\end{theorem}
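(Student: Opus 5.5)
The plan is to work with the normalized maxima from \eqref{eq:groupwise-EVT} and reduce everything to the behaviour of one deterministic sequence. Write $U_1=(M^{(1)}_{n_1}-b_{n_1})/a_{n_1}$ and $U_2=(M^{(2)}_{n_2}-\sigma b_{n_2})/(\sigma a_{n_2})$. These converge jointly in distribution to independent Gumbel variables $(\Lambda_1,\Lambda_2)$. The event $\{M^{(1)}_{n_1}>M^{(2)}_{n_2}\}$ can be rewritten as
\[
U_1>\frac{\sigma a_{n_2}}{a_{n_1}}\,U_2-\frac{b_{n_1}-\sigma b_{n_2}}{a_{n_1}} .
\]
Set $\rho_n=\sigma a_{n_2}/a_{n_1}=\sigma\sqrt{\log n_1/\log n_2}$ and $d_n=(b_{n_1}-\sigma b_{n_2})/a_{n_1}$. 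The argument then has three steps: identify exactly when $\rho_n$ and $d_n$ converge to finite limits; pass to the limit by Slutsky's theorem and the continuity of the Gumbel law; and show degeneracy in every other case.

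\textbf{Sufficiency.} Assume \eqref{eq:N1N2}. Then $\log n_1=\sigma^2\log n_2-\frac{\sigma^2-1}{2}\log\log n_2+\log C+o(1)$, so $\log n_1/\log n_2\to\sigma^2$ and $\rho_n\to\sigma^2$. The main computation is the second-order expansion of $d_n=\sqrt{2\log n_1}\,(b_{n_1}-\sigma b_{n_2})$. Put $L=\log n_2$, so that $\log n_1=\sigma^2L+\delta$ with $\delta=-\frac{\sigma^2-1}{2}\log L+\log C+o(1)$. We have
\[
\sqrt{2\log n_1}=\sigma\sqrt{2L}+\frac{\delta}{\sigma\sqrt{2L}}+o(L^{-1/2}),\qquad \log\log n_1=\log L+2\log\sigma+o(1).
\]
Substituting these into the definition of $b_n$ from Lemma~\ref{lem:EVT}, the $\sigma\sqrt{2L}$ terms cancel. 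Collecting the $L^{-1/2}$ terms gives
\[
b_{n_1}-\sigma b_{n_2}=\frac{1}{\sigma\sqrt{2L}}\left[\delta-\tfrac12(\log L+2\log\sigma+\log 4\pi)+\tfrac{\sigma^2}{2}(\log L+\log4\pi)\right]+o(L^{-1/2}).
\]
The choice of exponent in \eqref{eq:N1N2} is exactly what makes the $\log L$ terms cancel. The bracket then tends to $\log C-\log\sigma+\frac{\sigma^2-1}{2}\log 4\pi=\sigma^2\kappa(C,\sigma)$. Multiplying by $\sqrt{2\log n_1}\sim\sigma\sqrt{2L}$ yields $d_n\to\sigma^2\kappa$.

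By Slutsky's theorem, $U_1-\rho_nU_2+d_n\to\Lambda_1-\sigma^2\Lambda_2+\sigma^2\kappa$ in distribution. The limit law is continuous, so the probability converges to $\P(\Lambda_1>\sigma^2(\Lambda_2-\kappa))$. For the integral form, condition on $\Lambda_1$ and use $\P(\Lambda_2<\kappa+x/\sigma^2)=\exp(-e^{-\kappa}e^{-x/\sigma^2})$. Then substitute $y=e^{-x}$, under which the density $e^{-x}e^{-e^{-x}}\,dx$ becomes $e^{-y}\,dy$, to obtain \eqref{eq:limit_p}.

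\textbf{Necessity.} This is the step I expect to be the main obstacle, since $n_1,n_2$ are arbitrary sequences. Suppose \eqref{eq:N1N2} fails. Then along some subsequence, $r_n:=n_1n_2^{-\sigma^2}(\log n_2)^{(\sigma^2-1)/2}$ tends to $0$, to $\infty$, or to some $C'\ne C$. In the last case, the sufficiency argument along that subsequence gives a limit equal to the integral with $\kappa(C',\sigma)$. That integral is strictly decreasing in $\kappa$, hence strictly monotone in $C$, so it differs from the limit at $C$ and the sequence cannot converge. One point needs care here: if \eqref{eq:N1N2} fails for every $C$ but the probability converges in $(0,1)$, the same monotonicity argument forces $r_n$ to converge to a single value. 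It therefore suffices to show that $r_n\to0$ gives limit $0$ and $r_n\to\infty$ gives limit $1$ (along subsequences).

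To handle these extremes without a full expansion, I would use the explicit identity
\[
\P(M^{(1)}>M^{(2)})=\mathbb E\bigl[\Phi(M^{(2)})^{n_1}\bigr]
\]
together with monotonicity in $n_1$. If $r_n\to\infty$, then for every fixed $K$ eventually $n_1\ge K n_2^{\sigma^2}(\log n_2)^{-(\sigma^2-1)/2}$. The probability is at least the value for the critical sequence with constant $K$, whose limit is given by Step~1. That limit tends to $1$ as $K\to\infty$, because $\kappa\to\infty$ makes the integrand tend to $e^{-y}$. Symmetrically, $r_n\to0$ gives a limit below the critical value for every $K>0$, and this tends to $0$ as $K\to0$.

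Two technical points must be checked. First, the comparison sequence $\lfloor K n_2^{\sigma^2}(\log n_2)^{-(\sigma^2-1)/2}\rfloor$ must be paired with the same $n_2$. Second, subsequences along which $n_2$ stays bounded must be treated separately. If $n_2$ is bounded, then $M^{(2)}$ is tight while $M^{(1)}\to\infty$ (since $n=n_1+n_2\to\infty$ forces $n_1\to\infty$), so the limit is $1$. This is again degenerate.
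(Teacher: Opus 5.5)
Your proposal is correct and follows essentially the same route as the paper. For sufficiency, both use a second-order expansion of $b_{n_1}-\sigma b_{n_2}$ in which the $\log\log n_2$ terms cancel, followed by Slutsky's theorem under independence. For necessity, both use monotonicity in $n_1$, comparison with $\lfloor K n_2^{\sigma^2}(\log n_2)^{-(\sigma^2-1)/2}\rfloor$ at the same $n_2$, and strict monotonicity of the limit in $C$ along subsequences; your separate treatment of bounded $n_2$ is a small piece of care the paper omits.

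Two slips do not affect the argument. First, the identity should read $\P(M^{(1)}_{n_1}>M^{(2)}_{n_2})=1-\mathbb E[\Phi(M^{(2)}_{n_2})^{n_1}]$; your version is the complementary probability and would give the wrong direction of monotonicity, though you then use the correct direction. Second, the limiting integral is strictly increasing, not decreasing, in $\kappa$.
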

To prove Theorem~\ref{prop:2}, we need the following lemma. 
\begin{lemma}\label{lem2}
Under Condition \eqref{eq:N1N2}, it holds that 
\[
\frac{b_{n_1}-\sigma b_{n_2}}{\sigma a_{n_2}}
\to
\kappa(C,\sigma),
\]
where $\kappa(C,\sigma)$ is defined by \eqref{eq:0516-1}.
\end{lemma}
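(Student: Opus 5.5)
The plan is a direct second-order expansion of the centering constants of Lemma~\ref{lem:EVT} in terms of $L:=\log n_2$. First note that under \eqref{eq:N1N2} we have $n_1\ge n_2$ eventually. Hence $n=n_1+n_2\to\infty$ forces $n_2\to\infty$, so $L\to\infty$.

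Taking logarithms in \eqref{eq:N1N2} gives the key input
\[
\log n_1=\sigma^2 L+\delta_L,\qquad \delta_L:=-\tfrac{\sigma^2-1}{2}\log L+\log C+o(1).
\]
In particular $\delta_L=O(\log L)=o(L)$, and $\log\log n_1=\log L+\log\sigma^2+o(1)$. Since $\sigma a_{n_2}=\sigma(2L)^{-1/2}$, the quantity to study is
\[
\frac{\sqrt{2L}}{\sigma}\bigl(b_{n_1}-\sigma b_{n_2}\bigr).
\]
It therefore suffices to expand $b_{n_1}$ up to an error $o(L^{-1/2})$.

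For the leading term, I will Taylor-expand the square root as
\[
\sqrt{2\log n_1}=\sigma\sqrt{2L}\,\sqrt{1+\delta_L/(\sigma^2L)}.
\]
This yields
\[
\sqrt{2\log n_1}=\sigma\sqrt{2L}+\frac{\delta_L}{\sigma\sqrt{2L}}+O\!\Bigl(\frac{(\log L)^2}{L^{3/2}}\Bigr).
\]
For the correction term of $b_{n_1}$, I use $\sqrt{2\log n_1}=\sigma\sqrt{2L}\,(1+o(1))$ together with the expansion of $\log\log n_1$. This gives
\[
\frac{\log\log n_1+\log(4\pi)}{2\sqrt{2\log n_1}}
=\frac{\log L+\log\sigma^2+\log(4\pi)}{2\sigma\sqrt{2L}}+o(L^{-1/2}).
\]
Subtracting $\sigma b_{n_2}=\sigma\sqrt{2L}-\sigma(\log L+\log 4\pi)/(2\sqrt{2L})$, the leading terms $\sigma\sqrt{2L}$ cancel exactly. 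Multiplying what remains by $\sqrt{2L}/\sigma$ gives
\[
\frac{1}{\sigma^2}\Bigl(\delta_L-\log\sigma-\tfrac12\log L-\tfrac12\log(4\pi)\Bigr)+\tfrac12\log L+\tfrac12\log(4\pi)+o(1).
\]

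The final step is bookkeeping. The coefficient of $\log L$ is
\[
\frac{1}{\sigma^2}\Bigl(-\frac{\sigma^2-1}{2}-\frac12\Bigr)+\frac12=0.
\]
This cancellation is precisely why the exponent $-(\sigma^2-1)/2$ appears in \eqref{eq:N1N2}. The surviving constant is
\[
\frac{1}{\sigma^2}\log\frac{C}{\sigma}+\frac12\Bigl(1-\frac1{\sigma^2}\Bigr)\log(4\pi)=\kappa(C,\sigma).
\]

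I expect the main obstacle to be controlling the remainders uniformly, since everything must be correct to order $o(L^{-1/2})$ before multiplying by $\sqrt{L}$. Three error sources must each be shown to be $o(L^{-1/2})$: the $o(1)$ inside $\delta_L$, which contributes only $o(L^{-1/2})$; the Taylor remainder $O((\log L)^2L^{-3/2})$; and the replacement of $\sqrt{2\log n_1}$ by $\sigma\sqrt{2L}$ in a term that is already $O(\log L/\sqrt L)$, whose relative error $O(\log L/L)$ is harmless.
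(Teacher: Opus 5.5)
Your proposal is correct and follows essentially the paper's proof. It uses the same expansion $\log n_1=\sigma^2L-\frac{\sigma^2-1}{2}\log L+\log C+o(1)$, the same split into the leading square-root difference and the $\log\log$ correction terms, and the same cancellation of the $\log L$ coefficients; your Taylor expansion of $\sqrt{1+x}$ plays the role of the paper's rationalization $\sqrt{L_1L_2}-\sigma L_2=L_2(L_1/L_2-\sigma^2)/(\sqrt{L_1/L_2}+\sigma)$.

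Your explicit $O(\log L/L)$ relative-error bookkeeping is a bit more careful than the paper. The paper replaces $\sqrt{L_2/L_1}$ by $1/\sigma$ in front of $\log L_1$ citing only $L_1/L_2\to\sigma^2$. One small fix to your opening remark: $n_1\ge n_2$ only gives $n_1\to\infty$. It is the relation $n_1\sim Cn_2^{\sigma^2}(\log n_2)^{-(\sigma^2-1)/2}$ that then forces $n_2\to\infty$.
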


\begin{proof}
Write
$L_1:=\log n_1 $ and
$L_2:=\log n_2.$
By \eqref{eq:N1N2},
\begin{equation}\label{eq:L2-expansion}
L_1=\sigma^2L_2-\frac{\sigma^2-1}{2}\log L_2+\log C+o(1).
\end{equation}
Since $a_{n_2}=(2L_2)^{-1/2}$ and we can rewrite 
\[
b_m=\sqrt{2\log m}-\frac{\log\log m+\log(4\pi)}{2\sqrt{2\log m}},
\]
we have
\begin{align}\label{eq:v1-1}
\frac{b_{n_1}-\sigma b_{n_2}}{\sigma a_{n_2}}
&=T_{1,n}+T_{2,n}.
\end{align}
where
$T_{1,n}:=({\sqrt{2L_1}-\sigma\sqrt{2L_2}})/({\sigma a_{n_2}})$
and 
\begin{align*}
T_{2,n}:=-
\frac{1}{\sigma a_{n_2}}\left(
\frac{\log L_1+\log(4\pi)}{2\sqrt{2L_1}}
-
\sigma\frac{\log L_2+\log(4\pi)}{2\sqrt{2L_2}}
\right).
\end{align*}
Note that 
\begin{align*}
T_{1,n}
=
\frac{\sqrt{2L_2}}{\sigma}\bigl(\sqrt{2L_1}-\sigma\sqrt{2L_2}\bigr)
&=
\frac{2}{\sigma}\bigl(\sqrt{L_1L_2}-\sigma L_2\bigr)\\
&=
\frac{2L_2}{\sigma}\times\frac{L_1/L_2-\sigma^2}{\sqrt{L_1/L_2}+\sigma}
 =-\frac{\sigma^2-1}{2\sigma^2}\log L_2+\frac{1}{\sigma^2}\log C+o(1),
\end{align*}
where the last equality follows from  \eqref{eq:L2-expansion} and thus,
\[
\frac{L_1}{L_2}
=
\sigma^2-\frac{\sigma^2-1}{2}\frac{\log L_2}{L_2}+\frac{\log C}{L_2}+o(L_2^{-1}).
\]
Moreover, note that $L_1/L_2\to\sigma^2$, so $\log L_1=\log L_2+\log\sigma^2+o(1)$ and
 ${\sqrt{L_2}}/{\sqrt{L_1}}\to {1}/{\sigma}$.
Therefore,
\begin{align*}
T_{2,n}
&=-\frac{1}{2\sigma}\left[\sqrt{\frac{L_2}{L_1}}\bigl(\log L_1+\log(4\pi)\bigr)-\sigma\bigl(\log L_2+\log(4\pi)\bigr)\right]\\
&=
\frac{\sigma^2-1}{2\sigma^2}\log L_2
-\frac{1}{\sigma^2}\log\sigma
+\frac12\left(1-\frac{1}{\sigma^2}\right)\log(4\pi)
+o(1).
\end{align*}
Substituting  the expansions of $T_{1,n}$ and $T_{2,n}$ into \eqref{eq:v1-1}, the logarithmic terms in $\log L_2$ cancel, and we obtain
 $\frac{b_{n_1}-\sigma b_{n_2}}{\sigma a_{n_2}}
\to \kappa(C,\sigma)$. 
This completes the proof. 
\end{proof}
Now we are ready to prove Theorem~\ref{prop:2}.
\begin{proof}[Proof of Theorem~\ref{prop:2}]
We first prove the ``if" part. Assume that \eqref{eq:N1N2} holds. Define
\[
A_n:=\frac{M^{(1)}_{n_1}-b_{n_1}}{a_{n_1}},
\qquad
B_n:=\frac{M^{(2)}_{n_2}-b_{n_1}}{a_{n_1}}.
\]
Then it holds that 
\[
B_n
=
\frac{\sigma a_{n_2}}{a_{n_1}}\times\left(\frac{M^{(2)}_{n_2}-\sigma b_{n_2}}{\sigma a_{n_2}}
-
\frac{b_{n_1}-\sigma b_{n_2}}{\sigma a_{n_2}}\right)
\] and
\[
\mathbb P\!\left(M^{(1)}_{n_1}>M^{(2)}_{n_2}\right)=\mathbb P(A_n>B_n).
\]
Since
 ${\sigma a_{n_2}}/{a_{n_1}}=
 {\sigma}\sqrt{{\log n_1}/{\log n_2}}\to \sigma^{2},$
and Lemma~\ref{lem2} gives
 $  
( {b_{n_1}-\sigma b_{n_2}})/({\sigma a_{n_2}})\to\kappa(C,\sigma),
 $  
Slutsky's theorem yields
\[
A_n\stackrel{\mathrm d}{\longrightarrow}\Lambda
\qquad {\rm and}\qquad
B_n\stackrel{\mathrm d}{\longrightarrow}\sigma^2\left(\Lambda-\kappa(C,\sigma)\right),
\]
where $\Lambda$ is a Gumbel random variable. For each \(n\), \(A_n\) and \(B_n\) are independent, since they depend on disjoint independent groups of samples. Hence
\[
\mathcal L(A_n,B_n)
=
\mathcal L(A_n)\otimes \mathcal L(B_n)
\Rightarrow
\mathcal L(\Lambda_1)\otimes \mathcal L(\sigma^2(\Lambda_2-\kappa(C,\sigma))).
\]
Here \(\mathcal L(X)\) denotes the law of \(X\), and \(\mu\otimes\nu\) denotes the product measure of two probability measures \(\mu\) and \(\nu\). That is, \((A_n,B_n)\) converges jointly to a vector whose components are independent and distributed as \(\Lambda\) and \(\sigma^2(\Lambda-\kappa(C,\sigma))\), respectively. Therefore,
\[
\mathbb P(A_n>B_n)
\to
\mathbb P\!\left(\Lambda_1>\sigma^2(\Lambda_2-\kappa(C,\sigma))\right),
\]
where $\Lambda_1$ and $\Lambda_2$ are two independent Gumbel random variables. 
By standard manipulation, one can verify that 
\begin{align*}
\mathbb P\!\left(\Lambda_1>\sigma^2(\Lambda_2-t)\right)
= \int_0^\infty \exp\left(-y - e^{-t} y^{1/\sigma^2}\right) \mathrm dy.
\end{align*}
This proves \eqref{eq:limit_p} and completes the proof of the ``if" part.

We next prove the ``only if" part. Denote by
$f(n_2):=n_2^{\sigma^2}(\log n_2)^{-(\sigma^2-1)/2}.$ We show the result by considering the following three cases. 
\begin{itemize}
\item [(i)] Suppose $n_1/f(n_2)\to\infty$. Fix any $C>0$ and define $n_1'(n_2)=\lfloor Cf(n_2)\rfloor$, where \(\lfloor x\rfloor\) denotes the integer part of \(x\), i.e., the largest integer not exceeding \(x\). For all large $n_2$, we have $n_1\ge n_1'(n_2)$. Since the maximum over the first group is monotone in the sample size,
\[
\mathbb P\!\left(M^{(1)}_{n_1}>M^{(2)}_{n_2}\right)
\ge
\mathbb P\!\left(M^{(1)}_{n_1'}>{M^{(2)}_{n_2}}\right).
\]
Taking limits and using the ``if" part gives
\[
\liminf_{n\to\infty}\mathbb P\!\left(M^{(1)}_{n_1}>M^{(2)}_{n_2}\right)
\ge \P\!\left(\Lambda_1>\sigma^2(\Lambda_2-\kappa(C,\sigma))\right).
\]
Since $\kappa(C,\sigma)\to\infty$ as $C\to\infty$, the right-hand side can be made arbitrarily close to one. Therefore,
\(
\lim_{n\to\infty}\mathbb P\!\left(M^{(1)}_{n_1}>M^{(2)}_{n_2}\right)=1 
\) yielding a contradiction. 
\item [(ii)] Suppose that $n_1/f(n_2)\to0$. Fix any $C>0$ and define $n_1'(n_2)=\lfloor Cf(n_2)\rfloor$. Then $n_1\le n_1'(n_2)$ for all large $n_2$, so the same monotonicity argument gives
$\mathbb P(M^{(1)}_{n_1}>M^{(2)}_{n_2})
\le
\mathbb P(M^{(1)}_{n_1'}>M^{(2)}_{n_2}).$
Letting $n\to\infty$ and then $C\downarrow0$, we obtain
\( 
\limsup_{n\to\infty}\mathbb P\!\left(M^{(1)}_{n_1}>M^{(2)}_{n_2}\right)
\le0,
\)
and thus, \(
\lim_{n\to\infty}\mathbb P\!\left(M^{(1)}_{n_1}>M^{(2)}_{n_2}\right)=0 
\) yielding a contradiction. 
\item [(iii)] If $n_1/f(n_2)$ does not converge in $[0,\infty]$, then there exist two subsequences along which it converges to two distinct limits $C_1$ and $C_2$ in $[0,\infty]$. By the already established ``if" part and the conventions
$\kappa(0,\sigma):=-\infty$,
 and
$\kappa(\infty,\sigma):=+\infty,$
we assert that the corresponding comparison probabilities converge to two different limits,
\begin{align} \label{eq:0515-1}
 \mathbb P\!\left(\Lambda_1>\sigma^2(\Lambda_2-\kappa(C_1,\sigma))\right)
\neq
\mathbb P\!\left(\Lambda_1>\sigma^2(\Lambda_2-\kappa(C_2,\sigma))\right).   
\end{align}
 To see \eqref{eq:0515-1}, define
 $  
    G(C):=
    \mathbb{P}\left(\Lambda_1>\sigma^2(\Lambda_2-\kappa(C,\sigma))\right)$,  $
     C\in(0,\infty).
$
Noting that \( \kappa(C,\sigma)\) is strictly increasing on
\(C\in[0,\infty]\) and the random variable
\(
    W:=\Lambda_1-\sigma^2\Lambda_2
\)
has a continuous and strictly positive density on \(\mathbb{R}\), we have if \(0<C_1<C_2<\infty\), then
\(
    -\sigma^2\kappa(C_1,\sigma)>-\sigma^2\kappa(C_2,\sigma),
\)
and therefore,
 \(G\) is strictly increasing on \([0,\infty]\). That is, \eqref{eq:0515-1} holds, which implies that the sequence $\mathbb P (M^{(1)}_{n_1}>M^{(2)}_{n_2} )$ cannot converge. This yields a contradiction. 
\end{itemize}
Combining the above three cases, we have that $n_1/f(n_2)$ must converge to a limit in $(0,\infty)$. This completes the proof.\end{proof}

From the proof, we can see that outside the critical regime, the comparison necessarily degenerates. More precisely, if
\[
 \frac{n_1}{n_2^{\sigma^2}(\log n_2)^{-(\sigma^2-1)/2}}\to 0,
\]
then
$ 
\mathbb P\!\left(M^{(1)}_{n_1}>M^{(2)}_{n_2}\right)\to 0,
$ 
whereas if
\[
\frac{n_1}{n_2^{\sigma^2}(\log n_2)^{-(\sigma^2-1)/2}}\to \infty,
\]
then
$  
\mathbb P\!\left(M^{(1)}_{n_1}>M^{(2)}_{n_2}\right)\to 1.
$  
Thus, Theorem~\ref{prop:2} completely classifies the asymptotic behavior of the winning probability. 
A non-degenerate limit exists if and only if the sample sizes are balanced at the critical scale 
$n_2^{\sigma^2}(\log n_2)^{-(\sigma^2-1)/2}$; otherwise the comparison degenerates to $0$ or $1$. 
In this sense, the result identifies precisely when the smaller-variance first group can still compete asymptotically with the larger-variance second group.
In the critical regime, the limiting winning probability admits an integral representation as in (\ref{eq:limit_p}). 
In particular, the asymptotic balance depends not only on the leading polynomial order, but also on the logarithmic correction term.

\section{Multi-group case}
\label{sec:3}

 We now extend the analysis to multiple groups. Let $K\ge2$ be fixed. For each $k\in\{1,\ldots,K\}$, let
$X^{(k)}_1,\ldots,X^{(k)}_{n_k}$
be i.i.d.\ random variables with distribution $N(0,\sigma_k^2)$, where $\sigma_k>0$, and assume that the collections from different groups are mutually independent. Define
\[
M^{(k)}_{n_k}:=\max_{1\le i\le n_k}X^{(k)}_i,
\qquad k=1,\ldots,K,
\]
and let
\[
M_n:=\max_{1\le j\le K}M^{(j)}_{n_j}.
\]
  We are interested in the winning probabilities
\[
p_{n,k}:=\mathbb P\!\left(M^{(k)}_{n_k}>M^{(j)}_{n_j},\ \forall j\neq k\right),
\qquad k=1,\ldots,K.
\]
Since all distributions are continuous, ties occur with probability zero.

\begin{lemma}\label{lem:exchange-multi}
For each $k\in\{1,\ldots,K\}$,
\[
p_{n,k}=n_k\,\mathbb P(M_n=X^{(k)}_1).
\]
\end{lemma}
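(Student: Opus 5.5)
The plan is to repeat the exchangeability argument of footnote~\eqref{eq:260221-1}, now applied within group $k$ of the multi-group setting. First, because all variables are continuous and independent, ties among $\{X^{(j)}_i\}$ happen with probability zero. Consequently the events $E_i:=\{M_n=X^{(k)}_i\}$, $i=1,\ldots,n_k$, are pairwise disjoint up to null sets. For $i\ne i'$ the intersection $E_i\cap E_{i'}$ lies inside $\{X^{(k)}_i=X^{(k)}_{i'}\}$, and this set has probability zero.

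The second step identifies the union of the $E_i$. I will show that, up to a null set,
\[
\bigcup_{i=1}^{n_k}E_i=\{M_n=M^{(k)}_{n_k}\}=\bigl\{M^{(k)}_{n_k}>M^{(j)}_{n_j},\ \forall j\neq k\bigr\}.
\]
The first equality holds because $M^{(k)}_{n_k}$ is one of the $X^{(k)}_i$. For the second, consider the event where $M_n=M^{(k)}_{n_k}$ but the inequality against some $j\ne k$ fails. On that event $M^{(k)}_{n_k}=M^{(j)}_{n_j}$, which is a tie between variables from two independent continuous groups and therefore has probability zero. Combining the two facts, finite additivity gives $p_{n,k}=\sum_{i=1}^{n_k}\mathbb P(E_i)$.

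The last step shows that all $\mathbb P(E_i)$ coincide. Take a transposition of indices inside group $k$ that swaps $X^{(k)}_1$ and $X^{(k)}_i$. Since the variables in group $k$ are i.i.d. and independent of the other groups, this swap leaves the joint law of the whole family unchanged. It also leaves $M_n$ unchanged and carries $E_1$ onto $E_i$. Hence $\mathbb P(E_i)=\mathbb P(E_1)=\mathbb P(M_n=X^{(k)}_1)$, and summing over $i$ yields $p_{n,k}=n_k\,\mathbb P(M_n=X^{(k)}_1)$.

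None of these steps is a real obstacle. The one point needing care is the null-set bookkeeping: disjointness and the identification of the union hold only almost surely, and the argument relies on ties having probability zero both within a group and across groups.
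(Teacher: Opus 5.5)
Your proof is correct and follows the same route as the paper. You decompose the event $\{M^{(k)}_{n_k}>M^{(j)}_{n_j},\ \forall j\neq k\}$ into the almost surely disjoint events $\{M_n=X^{(k)}_i\}$, $i=1,\ldots,n_k$, and use exchangeability within group $k$ to equate their probabilities, just making the null-set bookkeeping (within-group and cross-group ties) and the transposition argument more explicit than the paper does.
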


\begin{proof}
Fix $k\in\{1,\ldots,K\}$. Since the variables in group $k$ are exchangeable and ties occur with probability zero,
\[
\left\{M^{(k)}_{n_k}=\max_{1\le j\le K}M^{(j)}_{n_j}\right\}
=
\bigcup_{i=1}^{n_k}
\left\{\max_{1\le j\le K}\max_{1\le \ell\le n_j}X^{(j)}_\ell=X^{(k)}_i\right\},
\]
and the union is disjoint up to null sets. Summing the probabilities of these events and using exchangeability inside group $k$ yields the claim.
\end{proof}

To formulate the asymptotic result, 
we compare all groups relative to group $1$. For clarity of presentation, we restrict attention to the case
\[
\sigma_1=1,
\qquad
\sigma_k>1,\quad k=2,\ldots,K.
\]
Write $\boldsymbol p_n=(p_{n,1},\ldots,p_{n,K})$ and define $\boldsymbol\beta_n=(\beta_{n,1},\ldots,\beta_{n,K})\in(0,\infty)^K$ by
\[
\beta_{n,1}:=1,\qquad
\beta_{n,k}:=\frac{n_1}{n_k^{\sigma_k^2}(\log n_k)^{-(\sigma_k^2-1)/2}},
\quad k=2,\ldots,K.
\]
Here \(\beta_{n,k}\) measures the size of group \(k\) relative to the critical scale determined by group \(1\).

Theorem~\ref{prop:2} suggests that, in the multi-group setting, a non-degenerate limit can only arise when each group is balanced against the baseline group at its own critical scale. 
The next theorem shows that this condition is also sufficient, and yields a complete characterization of the limiting winning probabilities.

\begin{theorem}[Multi-group non-degenerate limit]\label{prop:multi}
Assume that $n_k\to\infty$ for every $k\in\{1,\ldots,K\}$. Then \(\boldsymbol p_n\) converges to a non-degenerate limit \(\boldsymbol p=(p_1,\ldots,p_K)\in(0,1)^K\) if and only if
\[
\boldsymbol\beta_n\to\boldsymbol C=(C_1,\ldots,C_K)\in(0,\infty)^K.
\]
Under this condition, for each \(k=1,\ldots,K\),
\[
p_k
=
\mathbb P\!\left(\sigma_k^2(\Lambda_k-\kappa(C_k,\sigma_k)) > \sigma_j^2(\Lambda_j-\kappa(C_j,\sigma_j)),\ \forall j\neq k\right),
\]
where \(\Lambda_1,\ldots,\Lambda_K\) are independent Gumbel random variables, and  \(C_1=1\) and \(\kappa(C_1,\sigma_1)=0\). Equivalently,
\[
p_k
=
\int_0^\infty \frac{e^{-\kappa(C_k,\sigma_k)}}{\sigma_k^2} x^{1/\sigma_k^2 - 1} \exp\left( - \sum_{j=1}^K e^{-\kappa(C_j, \sigma_j)} x^{1/\sigma_j^2} \right) \mathrm{d} x,
\qquad k=1,\ldots,K.
\]
\end{theorem}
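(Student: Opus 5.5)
For the ``if'' direction I would follow the proof of Theorem~\ref{prop:2}, normalizing every group on the scale of group~$1$. Set
\[
Y_{n,k}:=\frac{M^{(k)}_{n_k}-b_{n_1}}{a_{n_1}},\qquad k=1,\ldots,K,
\]
so that $p_{n,k}=\mathbb P(Y_{n,k}>Y_{n,j},\ \forall j\neq k)$. For $k=1$, Lemma~\ref{lem:EVT} gives $Y_{n,1}\stackrel{\mathrm d}{\to}\Lambda_1$.

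For $k\ge2$, write
\[
Y_{n,k}=\frac{\sigma_k a_{n_k}}{a_{n_1}}\left(\frac{M^{(k)}_{n_k}-\sigma_k b_{n_k}}{\sigma_k a_{n_k}}-\frac{b_{n_1}-\sigma_k b_{n_k}}{\sigma_k a_{n_k}}\right).
\]
The condition $\beta_{n,k}\to C_k$ is exactly \eqref{eq:N1N2} with $(n_2,\sigma,C)$ replaced by $(n_k,\sigma_k,C_k)$. Hence Lemma~\ref{lem2} gives that the centering ratio tends to $\kappa(C_k,\sigma_k)$, and $\sigma_k a_{n_k}/a_{n_1}\to\sigma_k^2$. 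By Slutsky, $Y_{n,k}\stackrel{\mathrm d}{\to}\sigma_k^2(\Lambda_k-\kappa(C_k,\sigma_k))$. The convention $\kappa(C_1,\sigma_1)=0$ is consistent with this formula for $k=1$.

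The $Y_{n,k}$ are independent for each $n$, so the product measures converge. The limit vector has a continuous joint law, so the boundary of $\{y_k>y_j\ \forall j\ne k\}$ is a null set, and the portmanteau theorem gives $p_{n,k}\to p_k$. To get the integral form, write $V_j:=\sigma_j^2(\Lambda_j-\kappa_j)$, where $\kappa_j:=\kappa(C_j,\sigma_j)$. Then
\[
\mathbb P(V_j\le v)=\exp\bigl(-e^{\kappa_j}e^{-v/\sigma_j^2}\bigr).
\]
Conditioning on $V_k$ gives $p_k=\int \prod_{j\ne k}F_{V_j}(v)\,\mathrm dF_{V_k}(v)$. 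The substitution $x=e^{-v}$ produces the stated integral, with $\kappa$ entering through $e^{\pm\kappa}$. Because the paper's sign convention places $e^{-\kappa}$, I would recheck this bookkeeping carefully against \eqref{eq:limit_p} for $K=2$. Each $p_k\in(0,1)$, since every $V_j$ has full support.

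For the ``only if'' direction, assume $\boldsymbol p_n\to\boldsymbol p\in(0,1)^K$. By compactness of $[0,\infty]^K$, it suffices to show that every subsequential limit $\boldsymbol C$ of $\boldsymbol\beta_n$ lies in $(0,\infty)^K$ and is uniquely determined by $\boldsymbol p$.

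First, rule out $C_k\in\{0,\infty\}$. If $\beta_{n,k}\to0$ along a subsequence, then $p_{n,k}\le\mathbb P(M^{(k)}>M^{(1)})$. Applying case (i) of the proof of Theorem~\ref{prop:2} to the pair $(1,k)$ shows that $\mathbb P(M^{(1)}>M^{(k)})\to1$, so $p_{n,k}\to0$, a contradiction. If $\beta_{n,k}\to\infty$, then $p_{n,1}\le\mathbb P(M^{(1)}>M^{(k)})\to0$ by case (ii), again a contradiction. Hence all subsequential limits lie in $(0,\infty)^K$.

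The remaining task is uniqueness, i.e.\ injectivity of the map $\boldsymbol C\mapsto\boldsymbol p(\boldsymbol C)$ on $\{1\}\times(0,\infty)^{K-1}$. I expect this to be the main obstacle. My first attempt would be the following monotonicity argument. Take two distinct parameter vectors $\boldsymbol C\ne\boldsymbol C'$ and let $S$ be the set of indices $k\ge2$ where $\kappa_k'-\kappa_k$ is maximal and positive; swapping the roles of $\boldsymbol C$ and $\boldsymbol C'$ if necessary, we may assume such a set exists. Passing from $\boldsymbol C$ to $\boldsymbol C'$ lowers the $V_j$ for $j\in S$ by at least as much as the other $V_j$, and strictly more than $V_1$. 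The group $S$'s total winning probability should then strictly decrease, contradicting $\boldsymbol p(\boldsymbol C)=\boldsymbol p(\boldsymbol C')$. Making ``strictly'' rigorous is delicate because the shifts are scaled by $\sigma_j^2$. If this approach proves messy, I would fall back on a pairwise reduction, recovering each $C_k$ from $\boldsymbol p$ through a strictly monotone function as in case (iii) of Theorem~\ref{prop:2}.
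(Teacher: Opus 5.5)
\textbf{The gap: injectivity.} Your ``if'' direction and your exclusion of boundary limits follow the paper's proof. Your subsequence and compactness formulation is actually cleaner than the paper's tightness claim. You are also right that injectivity of $\boldsymbol C\mapsto\boldsymbol p(\boldsymbol C)$ is the crux of the ``only if'' part. The paper simply asserts, without proof, that distinct subsequential limits give distinct limit vectors.

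Your sketch does not yet close this step, because you choose $S$ by maximizing $\kappa_k'-\kappa_k$. Each $V_j=\sigma_j^2\Lambda_j-\sigma_j^2\kappa_j$ is a pure location family. Under the natural coupling (same $\Lambda_j$), passing from $\boldsymbol C$ to $\boldsymbol C'$ lowers $V_j$ by exactly $d_j:=\sigma_j^2(\kappa_j'-\kappa_j)$. With unequal $\sigma_j$, an index outside your $S$ can be lowered more than those inside, so your claimed monotonicity fails.

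\textbf{The fix.} Select $S$ by the shifts themselves:
\begin{itemize}
\item Since $d_1=0$ and $\boldsymbol d\neq\boldsymbol 0$ ($\kappa$ is strictly increasing in $C$), after possibly swapping $\boldsymbol C$ and $\boldsymbol C'$ you have $d^*:=\max_j d_j>0$.
\item Let $S:=\{j:d_j=d^*\}$, so $1\notin S$.
\item Then $V_j'=V_j-d^*$ for $j\in S$, and $V_j'>V_j-d^*$ for $j\notin S$. Hence
\[
\Bigl\{\max_{j\in S}V_j'>\max_{j\notin S}V_j'\Bigr\}\subseteq\Bigl\{\max_{j\in S}V_j>\max_{j\notin S}V_j\Bigr\}.
\]
\item The difference of these events contains $\{V_j<V_1\ \forall j\notin S\cup\{1\},\ V_1<\max_{j\in S}V_j<V_1+d^*\}$. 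This event has positive probability by independence and everywhere-positive densities.
\end{itemize}
Thus $\sum_{k\in S}p_k(\boldsymbol C')<\sum_{k\in S}p_k(\boldsymbol C)$, which gives injectivity. The $\sigma_j^2$ scaling causes no difficulty once you work with the locations $-\sigma_j^2\kappa_j$.

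Your fallback would not work for $K\ge3$. Every $p_k$ depends jointly on all of $C_2,\ldots,C_K$, so there is no one-dimensional monotone function of a single $C_k$ as in case (iii).

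\textbf{Two smaller slips.}
\begin{itemize}
\item The distribution function has the wrong sign. Since $V_j\le v$ iff $\Lambda_j\le\kappa_j+v/\sigma_j^2$, you get $\mathbb P(V_j\le v)=\exp\bigl(-e^{-\kappa_j}e^{-v/\sigma_j^2}\bigr)$, not $\exp\bigl(-e^{\kappa_j}e^{-v/\sigma_j^2}\bigr)$. With the correct sign, the substitution $x=e^{-v}$ yields exactly the paper's integral, so the paper's $e^{-\kappa}$ convention is right.
\item Your boundary cases are reversed. $\beta_{n,k}\to0$ means $n_1$ is below the critical scale, so case (ii) gives $\mathbb P(M^{(1)}>M^{(k)})\to0$ and hence $p_{n,1}\to0$. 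Conversely, $\beta_{n,k}\to\infty$ is case (i) and forces $p_{n,k}\le\mathbb P(M^{(k)}>M^{(1)})\to0$. The contradiction is reached either way, so only the bookkeeping needs correcting.
\end{itemize}
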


\begin{proof}
We first prove the ``if'' part. Assume that $\boldsymbol\beta_n\to\boldsymbol C=(C_1,\ldots,C_K)\in(0,\infty)^K$, where $C_1=1$. For each $k\in\{1,\ldots,K\}$, define
\[
Y_{n,k}:=\frac{M^{(k)}_{n_k}-b_{n_1}}{a_{n_1}}.
\]
For $k=1$, \eqref{eq:groupwise-EVT} gives
\[
Y_{n,1}=\frac{M^{(1)}_{n_1}-b_{n_1}}{a_{n_1}}\stackrel{\mathrm d}{\longrightarrow}\Lambda_1.
\]
For $k\ge2$, write
\begin{align}
    \label{eq:0515-2}
    Y_{n,k}
=
\frac{\sigma_k a_{n_k}}{a_{n_1}}\cdot\frac{M^{(k)}_{n_k}-\sigma_k b_{n_k}}{\sigma_k a_{n_k}}
+
\frac{\sigma_k b_{n_k}-b_{n_1}}{a_{n_1}}.
\end{align}
The condition $\beta_{n,k}\to C_k$ is equivalent to
\begin{align*}
n_1\sim C_k n_k^{\sigma_k^2}(\log n_k)^{-(\sigma_k^2-1)/2}.
\end{align*}
By the same asymptotic expansion as in Lemma~\ref{lem2}, this yields
\begin{align}
    \label{eq:0515-3}
\frac{b_{n_1}-\sigma_k b_{n_k}}{\sigma_k a_{n_k}} \to \kappa(C_k, \sigma_k).
\end{align}
Moreover, the relation $\log n_1 \sim \sigma_k^2 \log n_k$ implies
\begin{align*}
\frac{\sigma_k a_{n_k}}{a_{n_1}} = \sigma_k \sqrt{\frac{\log n_1}{\log n_k}} \to \sigma_k^2,
\end{align*}
and thus,
\begin{align}
    \label{eq:0515-5}
\frac{\sigma_k b_{n_k}-b_{n_1}}{a_{n_1}} = - \frac{b_{n_1}-\sigma_k b_{n_k}}{\sigma_k a_{n_k}} \times \frac{\sigma_k a_{n_k}}{a_{n_1}} \to -\sigma_k^2 \kappa(C_k, \sigma_k).
\end{align}
Hence by Slutsky's theorem, substituting \eqref{eq:0515-3} and \eqref{eq:0515-5} into \eqref{eq:0515-2}  yields
\[
Y_{n,k}\stackrel{\mathrm d}{\longrightarrow}\sigma_k^2 \Lambda_k - \sigma_k^2 \kappa(C_k,\sigma_k),
\qquad k=1,\ldots,K.
\]
 Because the group maxima are based on mutually independent samples, Slutsky's theorem extends directly to the joint convergence of $(Y_{n,1}, \ldots, Y_{n,K})$. Let
\[
Z_k = \sigma_k^2(\Lambda_k - \kappa(C_k, \sigma_k)),
\qquad k=1,\ldots,K,
\]
where \(\kappa(C_1, \sigma_1) = 0\). Because $M^{(k)}_{n_k} > M^{(j)}_{n_j} \iff Y_{n,k} > Y_{n,j}$, we have
\[
p_{n,k}
=
\mathbb P\!\left(Y_{n,k}>Y_{n,j},\ \forall j\neq k\right)
\to
\mathbb P\!\left(Z_k > Z_j,\ \forall j\neq k\right).
\]

To explicitly compute this limiting probability, let $E_j=e^{-\Lambda_j}$, so that $E_1,\ldots,E_K$ are independent $\mathrm{Exp}(1)$ random variables. The event $Z_k > Z_j$ is equivalent to
\[
E_j > E_k^{\sigma_k^2/\sigma_j^2} \exp\left( \frac{\sigma_k^2 \kappa(C_k, \sigma_k)}{\sigma_j^2} - \kappa(C_j, \sigma_j) \right).
\]
Conditioning on $E_k = y$, we integrate the product of $\mathbb{P}(E_j > t \mid E_k=y) = \exp(-t)$ for all $j \neq k$:
\[
p_k = \int_0^\infty \exp(-y) \prod_{j \neq k} \exp\left( - \left( y e^{\kappa(C_k, \sigma_k)} \right)^{\sigma_k^2/\sigma_j^2} e^{-\kappa(C_j, \sigma_j)} \right) \mathrm{d} y.
\]
Applying the change of variables $x = (y e^{\kappa(C_k, \sigma_k)})^{\sigma_k^2}$ yields the symmetric generalized integral representation
\[
p_k = \int_0^\infty \frac{e^{-\kappa(C_k,\sigma_k)}}{\sigma_k^2} x^{1/\sigma_k^2 - 1} \exp\left( - \sum_{j=1}^K e^{-\kappa(C_j, \sigma_j)} x^{1/\sigma_j^2} \right) \mathrm{d} x.
\]
This proves the ``if'' part.

We now prove the ``only if'' part. Suppose $\boldsymbol p_n\to\boldsymbol p\in(0,1)^K$. Then no component can asymptotically vanish. In particular, for each $k\ge2$, neither $\beta_{n,k}\to0$ nor $\beta_{n,k}\to\infty$ is possible: by the two-group result in Theorem~\ref{prop:2}, \(\beta_{n,k}\to0\) implies \(p_1\to0\), while
\(\beta_{n,k}\to\infty\) implies \(p_k\to0\).
Hence each sequence $\{\beta_{n,k}\}$ is tight in $(0,\infty)$.
If $\boldsymbol\beta_n$ failed to converge, there would exist two subsequences converging to distinct limits $\boldsymbol C\neq\boldsymbol C'$ in $(0,\infty)^K$. By the ``if'' part, the corresponding subsequences of $\boldsymbol p_n$ would converge to two different limit vectors. This contradicts the assumed convergence of $\boldsymbol p_n$. Therefore $\boldsymbol\beta_n$ must converge in $(0,\infty)^K$.
\end{proof}

Theorem~\ref{prop:multi} shows that the multi-group problem admits a complete asymptotic classification, just as in the two-group case. 
A non-degenerate limit exists if and only if each group is balanced against the baseline group at the appropriate critical scale. 
Under this condition, the limiting winning probabilities take an explicit integral form generalizing the two-group case.

\section{Numerical Experiments}
\label{sec:numerical}

This section provides numerical validations of Theorem~\ref{prop:2}. We first present a controlled simulation study under independent Gaussian settings, followed by an empirical validation using real-world climatology data.

\subsection{Simulation Study}
To verify the theoretical predictions of Theorem~\ref{prop:2}, we simulate independent Gaussian maxima for two groups. The first group has unit variance, while the second group has standard deviation $\sigma \in \{1.2, 1.5, 2.0\}$. The sample size of the second group, $n_2$, varies over a wide logarithmic scale. To keep both groups asymptotically competitive, the first group's sample size is determined by the critical scaling constraint:
\[
n_1 = \lfloor C n_2^{\sigma^2} (\log n_2)^{-(\sigma^2-1)/2} \rfloor
\]
for multiple matching constants $C \in \{0.1, 1.0, 5.0\}$. For each parameter configuration $(n_2, \sigma, C)$, we generate $100,\!000$ independent trials. The maxima $M_{n_1}^{(1)}$ and $M_{n_2}^{(2)}$ are efficiently simulated via uniform quantile transformations.

\begin{figure}[htbp]
    \centering
    \caption{Simulated winning probabilities versus theoretical limits.}
    \includegraphics[width=\textwidth]{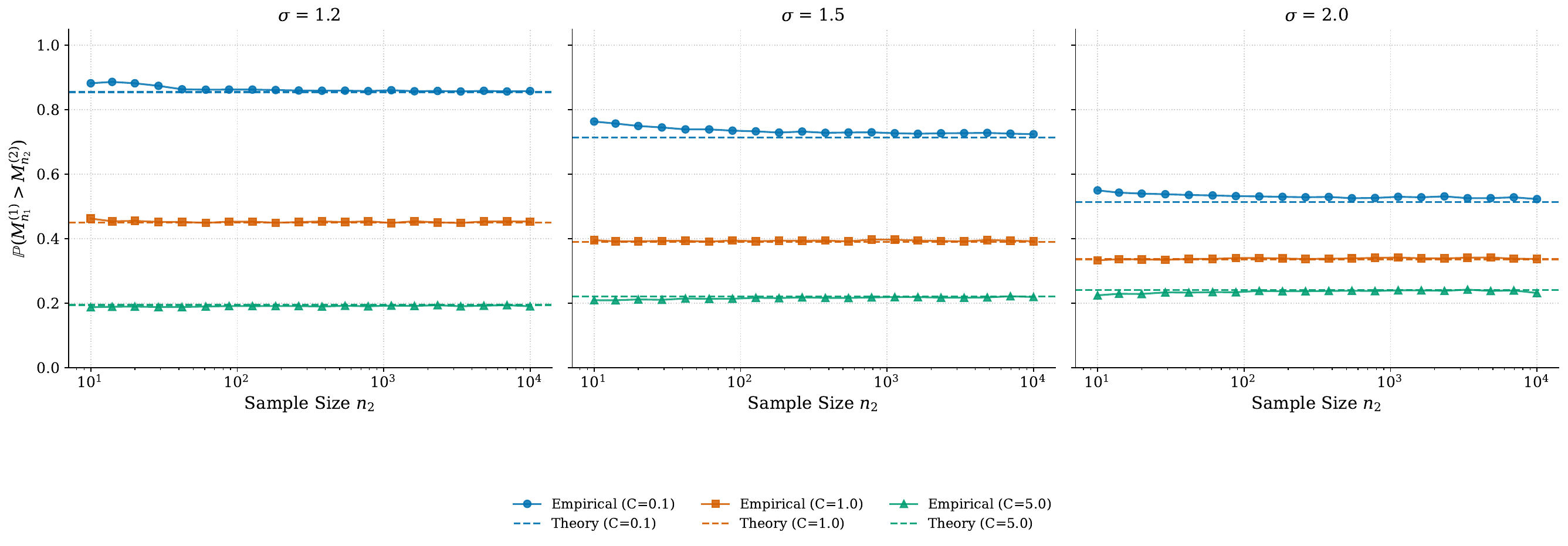}
    \label{fig:simulation}
    \vspace{0.5em}
    \parbox{\textwidth}{\footnotesize \textit{Notes.} The solid lines with markers display the empirical probability from $100,\!000$ simulated trials that Group 1 attains the maximum under varying scale multipliers $C$. The theoretical asymptotes predicted by Theorem~\ref{prop:2} are shown as dashed lines. The panels correspond to differing standard deviations $\sigma \in \{1.2, 1.5, 2.0\}$. Note the logarithmic horizontal axis scale.}
\end{figure}

As illustrated in Figure~\ref{fig:simulation}, the empirical probability that the first group attains the maximum converges precisely to the theoretical asymptotes established in Theorem~\ref{prop:2} as $n_2$ grows. The results confirm the accuracy of the critical scaling regime: as the variance disparity $\sigma$ increases, the sample size $n_1$ required for the lower-variance group to remain competitive grows enormously, fully prescribed by both the polynomial exponent and the $\sigma$-dependent logarithmic correction.

\subsection{Empirical Validation}
We further provide an empirical validation of Theorem~\ref{prop:2} using the NOAA Global Historical Climatology Network Monthly (GHCN-Monthly) v4 dataset. We focus on monthly average temperature records of United States stations from January 1980 to December 2025. To ensure spatial coherence, the pool is restricted to a geographical bounding box defined by latitudes $[30^\circ, 40^\circ)$ and longitudes $[-95^\circ, -75^\circ)$. For each station, we remove the seasonal cycle by subtracting the corresponding month-of-year means as well as any remaining linear trend across the observation period. We fit an AR(1) model to the detrended anomalies of each station and extract the resulting one-step innovations.

We split the pool of valid innovations into two distinct groups---a ``low variance" group (Group 1) and a ``high variance" group (Group 2). This partition is determined objectively via a 1D K-Means split on the innovation variances to minimize the within-cluster dispersion, allowing the standard deviation ratio $\sigma = \sigma_2/\sigma_1 > 1$ to form naturally from the dataset's right-skewed variance profile.

In the experiment, we vary $n_2 \in [5, 150]$ and set $n_1 = \lfloor C n_2^{\sigma^2} (\log n_2)^{-(\sigma^2-1)/2} \rfloor$ for $C \in \{0.1, 0.6, 3.0\}$. To estimate the winning probability for each pair of sample sizes $(n_1, n_2)$, we draw $10,\!000$ independent bootstrap samples with replacement from the respective groups and compute the frequency of $M_{n_1}^{(1)} > M_{n_2}^{(2)}$.

\begin{figure}[htbp]
    \centering
    \caption{Empirical winning probabilities versus theoretical limits.}
    \includegraphics[width=0.8\textwidth]{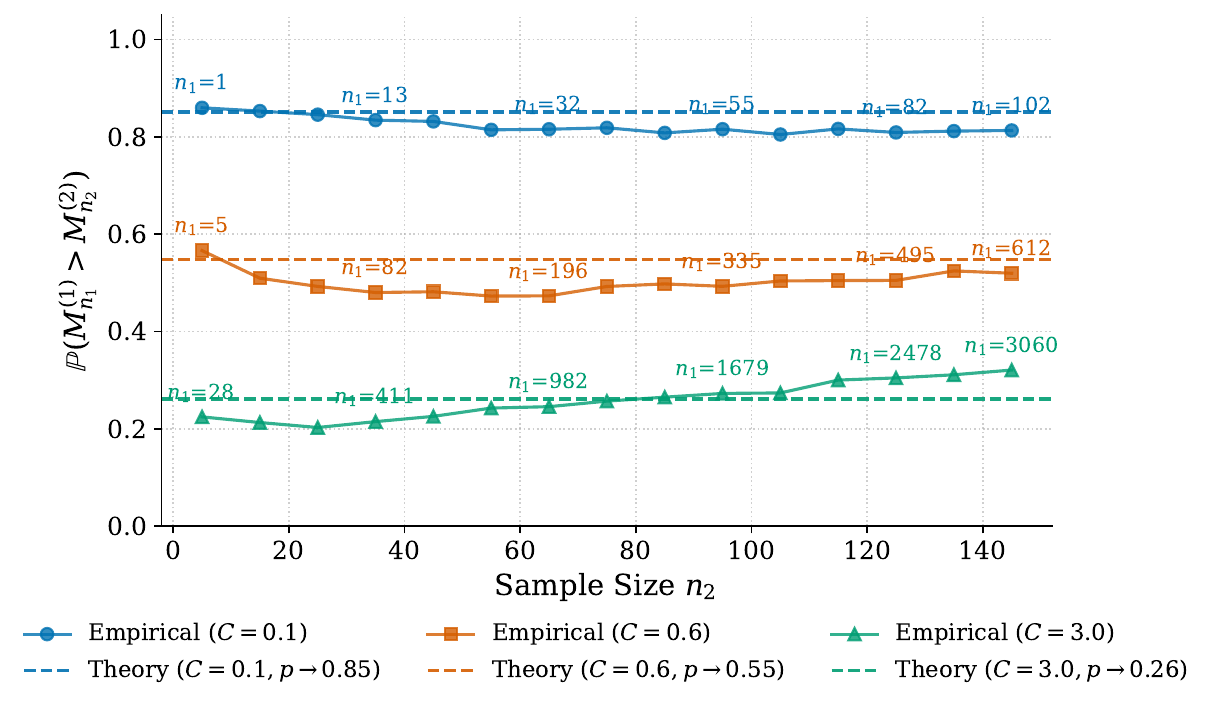}
    \label{fig:empirical_validation}
    \vspace{0.5em}
    \parbox{0.8\textwidth}{\footnotesize \textit{Notes.} The solid lines represent the bootstrap estimated probabilities over $10,\!000$ iterations from real climate station innovations under different matching constants $C$. Dashed lines denote the theoretical asymptotes from Theorem~\ref{prop:2}.}
\end{figure}

As illustrated in Figure~\ref{fig:empirical_validation}, the empirical results consistently stabilize toward the theoretical limit. Despite the inherent non-Gaussianity of real-world climate innovations, the rapid alignment between the bootstrap estimates and the predicted horizontal asymptotes confirms the robustness and predictive power of the extreme value limits derived in our framework.

\end{document}